\newtheorem{theorem}{Theorem}[section]
\newtheorem{proposition}[theorem]{Proposition}
\newtheorem{corollary}[theorem]{Corollary}
\theoremstyle{definition}
\newtheorem{definition}[theorem]{Definition}
\numberwithin{equation}{section}
\newcommand\N {{\mathbb N}} 
\newcommand\R {{\mathbb R}}
\newcommand\Q {{\mathbb Q}}
\newcommand\Z {{\mathbb Z}} 
\newcommand\st{{\rm st}} 
\newcommand{\hr} {{}^{\mathfrak{h}}\hspace*{-0.5pt}\R}
\newcommand\astr{{{}^\ast\hspace*{-0.5pt}\R}}
\newcommand\astq{{{}^{\ast}\hspace*{-.6pt}\Q}}
\newcommand\astm{{{}^{\ast}\hspace*{-2.2pt}M}}
\newcommand\astd{{}^{\ast}\!d}
\newcommand{\halo}{{\scalebox{1}[.3]{\ensuremath{\bigcirc}}}} 
\newcommand{\halobig}{{\scalebox{2}[.3]{\ensuremath{\bigcirc}}}}
\newcommand\astb{{}^{\ast}\hspace*{-2pt}B}
\newcommand\hm{{\;\overline {\hspace*{-3pt}M}}}
\newcommand\ob{{\overline{\!B}}}
\newcommand\aob{{{}^\ast\overline{\!B}}}
\numberwithin{equation}{section}
\author{Vladimir Kanovei} \address{V. Kanovei, IPPI RAS, Moscow,
Russia}\email{kanovei@googlemail.com}
\author{Mikhail G. Katz}\address{M. Katz, Department of Mathematics,
Bar Ilan University, Ramat Gan 5290002 Israel}
\email{katzmik@macs.biu.ac.il}
\author{Tahl Nowik}\address{T. Nowik, Department of Mathematics, Bar
Ilan University, Ramat Gan 5290002 Israel}\email{tahl@math.biu.ac.il}
\title
[Metric completions, Heine--Borel, and approachability]
{Metric completions, the Heine--Borel property, and approachability}
\begin{document}

\begin{abstract}
We show that the metric universal cover of a plane with a puncture
yields an example of a nonstandard hull properly containing the metric
completion of a metric space.  As mentioned by do Carmo, a
nonextendible Riemannian manifold can be noncomplete, but in the
broader category of metric spaces it becomes extendible.  We give a
short proof of a characterisation of the Heine--Borel property of the
metric completion of a metric space~$M$ in terms of the absence of
inapproachable finite points in~$\astm$.

Keywords: galaxy; halo; metric completion; nonstandard hull; universal
cover; Heine--Borel property
\end{abstract}

\maketitle

\section{Introduction}
\label{s1}

A~$p$-adic power series example of the phenomenon of inapproachability
in a nonstandard hull of a metric space~$M$ appears in Goldblatt
\cite[p.\;252]{Go98}.  Recall that a point~$x\in\astm$ is approachable
if for each~$\varepsilon\in\R^+$ there is some
(standard)~$x_\varepsilon\in M$ such
that~$\astd(x,x_\varepsilon)<\varepsilon$ (op.\;cit., p.\;236).
Otherwise~$x$ is called inapproachable.

A nonstandard hull of a metric space~$M$ can in general contain points
that need to be discarded (namely, the inapproachable ones) in order
to form the metric completion of~$M$.  We provide a more geometric
example of such a phenomenon stemming from differential geometry.  The
example is the metric universal cover of a plane with one puncture;
see Definition~\ref{d32}.

Let~$\astr$ be a hyperreal field extending~$\R$.  Denote
by~$\hr\subseteq\astr$ the subring consisting of finite hyperreals.
The ring~$\hr$ is the domain of the standard part
function~$\st\colon\hr\to\R$.  Here~$\st(x)$ for~$x\in\hr$ is the real
number corresponding to the Dedekind cut on~$\R$ defined by~$x$, via
the embedding~$\R\hookrightarrow\hr$.

Let~$\astq\subseteq \astr$ be the subfield consisting of hyperrational
numbers.  Let~$F\subseteq\astq$ be the ring of finite hyperrationals,
so that~$F=\astq\cap\hr$.  Let~$I\subseteq F$ be the ideal of
hyperrational infinitesimals.  If~$x\in\astq$ then its \emph{halo} is
the (co)set~$\text{hal}(x)=x+I\subseteq \astq$.  The following result
is well known; see e.g., \cite{Da77}.

\begin{theorem}
\label{t672} 
The ideal~$I\subseteq F$ is maximal, and the quotient
field\,~$\widehat\Q=F/I$ is naturally isomorphic to~$\R$, so that we
have a short exact sequence~$0\to I \to F \to \R \to 0$.
\end{theorem}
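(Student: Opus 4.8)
The plan is to realise the quotient $F/I$ concretely through the standard part map and then invoke the first isomorphism theorem. I would begin by recording the elementary fact that $I$ is a \emph{proper} ideal of $F$: a sum of two hyperrational infinitesimals is a hyperrational infinitesimal, and the product of a finite hyperrational $x$ with a hyperrational infinitesimal $y$ is again a hyperrational infinitesimal, since $xy\in\astq$ ($\astq$ being a field) and, with $|x|\le n$ for a suitable standard $n$, one has $|xy|\le n|y|<\varepsilon$ for every standard $\varepsilon\in\R^+$ (apply $|y|<\varepsilon/n$). Also $1\notin I$, so $I\neq F$.

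Next I would consider the restriction $\varphi=\st|_F\colon F\to\R$. Since $\st$ is a ring homomorphism on $\hr$ (as recalled above via Dedekind cuts) and $F=\astq\cap\hr\subseteq\hr$, the map $\varphi$ is a ring homomorphism. Its kernel is precisely the set of finite hyperrationals with zero standard part, that is, the hyperrational infinitesimals, so $\ker\varphi=I$.

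The one step that is more than bookkeeping is the surjectivity of $\varphi$. Apply transfer to the true statement that for every real $y$ and every $\varepsilon>0$ there is a rational within distance $\varepsilon$ of $y$; taking $y=r\in\R\subseteq\astr$ and $\varepsilon$ a positive infinitesimal yields a hyperrational $q$ with $|q-r|<\varepsilon$, whence $q\in F$ and $\st(q)=r$. Equivalently, $\astq$ is dense in $\astr$, so every real is the standard part of a finite hyperrational, i.e.\ $\varphi$ is onto.

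Finally, the first isomorphism theorem gives $\widehat\Q=F/I=F/\ker\varphi\cong\operatorname{im}\varphi=\R$, and this isomorphism is the natural one: it is induced by the standard part map and restricts to the identity on the copy of $\Q$ common to $F$ and to $\R$. Since $\R$ is a field, $I$ is a maximal ideal (in fact $F$ is local with maximal ideal $I$, as every $x\in F\setminus I$ is appreciable and hence has $x^{-1}\in\astq\cap\hr=F$). The asserted short exact sequence $0\to I\to F\xrightarrow{\st}\R\to 0$ is then just a repackaging: exactness at $I$ and at $F$ is the identity $\ker\varphi=I$, and exactness at $\R$ is the surjectivity of $\varphi$. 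I expect the density/transfer argument for surjectivity to be the only point carrying genuine content; the remaining verifications are routine.
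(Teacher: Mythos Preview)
Your proof is correct and follows essentially the same approach as the paper: both obtain surjectivity of the standard-part map by transferring the density of~$\Q$ in~$\R$ and then choosing an infinitesimal~$\epsilon$, with the remaining verifications routine. Your version is more explicit (checking that~$I$ is an ideal, invoking the first isomorphism theorem, noting that~$F$ is local), whereas the paper defines the map directly on halos and records only the surjectivity argument.
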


\begin{proof}
A typical element of~$\widehat\Q$ is a halo, namely
$\text{hal}(x)\subseteq\astq$, where each~$x\in F$ can be viewed as an
element of the larger ring~$\hr\subseteq\astr$.  Then the map
\[
\phi(\text{hal}(x))=\st(x)
\]
is the required map~$\phi\colon\widehat\Q\to\R$.  To show surjectivity
of~$\phi$, note that over~$\R$ we have
\begin{equation}
\label{e11}
(\forall\epsilon\in\R^+)(\forall{}y\in\R)(\exists{}q\in\Q)\;
[|y-q|<\epsilon].
\end{equation}
Recall that Robinson's transfer principle (see \cite{Ro66}) asserts
that every first-order formula, e.g., \eqref{e11}, has a hyperreal
counterpart obtained by starring the ranges of the bound variables.
We apply the upward transfer principle to \eqref{e11} to obtain
\begin{equation}
\label{e12}
(\forall \epsilon\in\astr^+)( \forall y\in\astr)( \exists
q\in\astq)\;\big[|y-q|<\epsilon\big].
\end{equation}
Now choose an infinitesimal~$\epsilon >0$.  Then formula~\eqref{e12}
implies that for each real number~$y$ there is a hyperrational~$q$
with~$y\approx q$, where~$\approx$ is the relation of infinite
proximity (i.e.,~$y-q$ is infinitesimal).  Therefore we
obtain~$\phi(\text{hal}(q))=y$, as required.
\end{proof}

A framework for differential geometry via infinitesimal displacements
was developed in \cite{15d}.  An application to small oscillations of
the pendulum appeared in \cite{16c}.  The reference \cite{Go98} is a
good introductory exposition of Robinson's techniques, where the
reader can find the definitions and properties of the notions
exploited in this text.  Additional material on Robinson's framework
can be found in \cite{17f}.  The historical significance of Robinson's
framework for infinitesimal analysis in relation to the work of
Fermat, Gregory, Leibniz, Euler, and Cauchy has been analyzed
respectively in \cite{18d}, \cite{19b}, \cite{18a}, \cite{17b},
\cite{17a}, and elsewhere.  The approach is not without its critics;
see e.g., \cite{13d}.

\section{Ihull construction}
\label{s62}

In Section~\ref{s1} we described a construction of~$\R$ starting
from~$\astq$.  More generally, one has the following construction.  In
the literature this construction is often referred to as the
\emph{nonstandard hull construction}, which we will refer to as the
\emph{ihull construction} (``i'' for \emph{infinitesimal}) for short.
The general construction takes place in the context of an arbitrary
metric space~$M$.

Given a metric space~$(M,d)$, we consider its natural
extension~$\astm$.  The distance function~$d$ extends to a
hyperreal-valued function~$\astd$ on~$\astm$ as usual.  The halo
of~$x\in \astm$ is defined to be the set of points in~$\astm$ at
infinitesimal distance from~$x$.

Let~$\approx$ be the relation of infinite proximity in~$\astm$.
Denote by~$F\subseteq \astm$ the set of points of~$\astm$ at finite
distance from any point of~$M$ (i.e., the galaxy of any element
in~$M$).  The quotient
\[
F/\!\approx
\]
is called the \emph{ihull} of~$M$ and denoted~$\widehat M$.  In this
terminology, Theorem~\ref{t672} asserts that the ihull of~$\Q$ is
naturally isomorphic to~$\R$.  Thus, ihulls provide a natural way of
obtaining completions; see Morgan (\cite{Mo16}, 2016) for a general
framework for completions.  We will exploit the following notation for
halos.
\begin{definition}
We let~$\overset{\halo}{x}$ be the halo of~$x\in \astm$.
\end{definition}

In general the ihull~$\widehat M$ of a metric space~$M$ consists of halos
$\overset{\halo}{x}$, where~$x\in F$, with distance~$d$ on~$\widehat M$
defined to be
\begin{equation}
\label{e681}
d \left( \overset{\halo}{x},\overset{\halo}{y}\right)=\st(\astd(x,y)).
\end{equation}

Note that that~$M$ may be viewed as a subset of~$\widehat{M}$.  Hence
it is meaningful to speak of the closure of~$M$ in~$\widehat{M}$.  We
will denote such closure~$cl(M)$ to distinguish it from the abstract
notion of the metric completion~$\hm$ of~$M$ (see Section~\ref{s5}).
The closure~$cl(M)$ is indeed the completion in the sense that it is
complete and~$M$ is dense in it.  The metric completion~$\hm$ of~$M$
is the approachable part of~$\widehat M$, and coincides with the
closure~$cl(M)$ of~$M$ in~$\widehat M$; see \cite[Chapter 18]{Go98}
for a detailed discussion.

\section{Universal cover of plane with a puncture}

The ihull~$\widehat M$ may in general be larger than the metric
completion~$\hm$ of~$M$.  An example of such a phenomenon was given in
\cite[p.\;252]{Go98} in terms of~$p$-adic series.  We provide a more
geometric example of such a phenomenon stemming from differential
geometry.

We start with the standard flat metric~$dx^2+dy^2$ in
the~$(x,y)$-plane, which can be written in polar
coordinates~$(r,\theta)$ as~$dr^2+r^2d\theta^2$ where~$\theta$ is the
usual polar angle in~$\R/2\pi\Z$.

\begin{definition}
\label{d32}
Let~$M$ be the metric universal cover of~$\R^2\setminus\{0\}$ (the
plane minus the origin), coordinatized by~$(r,\zeta)$ where~$r>0$
and~$\zeta$ is an arbitrary real number.
\end{definition}

In formulas,~$M$ can be given by the coordinate
chart~$r>0$,~$\zeta\in\R$, equipped with the metric
\begin{equation}
\label{e691}
dr^2+r^2d\zeta^2.
\end{equation}
Formula~\eqref{e691} provides a description of the metric universal
cover of the flat metric on~$\R^2\setminus\{0\}$, for which the
covering map~$M\to\R^2\setminus\{0\}$
sending~$(r,\zeta)\mapsto(r,\theta)$ induces a local Riemannian
isometry, where~$\theta$ corresponds to the coset~$\zeta+2\pi\Z$.
Recall that a number is called \emph{appreciable} when it is finite
but not infinitesimal.

\begin{theorem}
Points of the form~$\overset{{}\halobig}{(r,{\zeta})}$ for
appreciable~$r$ and infinite~$\zeta$ are in the ihull\,~$\widehat M$
but are not approachable from~$M$.
\end{theorem}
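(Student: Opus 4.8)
The plan is to establish two things: first, that a point $\overset{\halobig}{(r,\zeta)}$ with $r$ appreciable and $\zeta$ infinite actually lies in $\widehat M$, i.e., that $(r,\zeta)$ is at finite distance from some (equivalently, any) point of $M$; and second, that no standard point $(r_0,\zeta_0)\in M$ can be at infinitesimal distance from $(r,\zeta)$, which is exactly the failure of approachability. For the first point I would exhibit an explicit short path in $M$ from a basepoint such as $(r,0)$ to $(r,\zeta)$: going radially inward from $(r,0)$ to $(\varepsilon,0)$, then along the circle $\{r=\varepsilon\}$ from $\zeta=0$ to $\zeta=\zeta$, then radially outward back to $(r,\zeta)$. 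By \eqref{e691} this path has length $2(r-\varepsilon)+\varepsilon|\zeta|$, and by transfer the same construction works internally in $\astm$; choosing $\varepsilon$ infinitesimal — say $\varepsilon=1/|\zeta|$ — makes the total length $2(r-\varepsilon)+1$, which is finite. Hence $\astd\big((r,0),(r,\zeta)\big)$ is finite and $(r,\zeta)\in F$, so its halo is a legitimate element of $\widehat M$.

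For the second point, the key geometric fact is a lower bound on distances in $M$: for any two points $(r_1,\zeta_1)$ and $(r_2,\zeta_2)$ of $M$, the distance is bounded below by a quantity that forces large $\zeta$-separation to cost a definite amount. Concretely, I would argue that if $|\zeta_1-\zeta_2|$ is large, any path joining the two points must either travel deep toward the puncture (incurring large radial cost to get both down to small $r$ and back up) or else remain at bounded-away-from-zero radius while sweeping a large angle (incurring large circumferential cost $\ge r_{\min}\cdot|\zeta_1-\zeta_2|$). Quantitatively, for a path that stays in $\{r\ge\rho\}$ the length is at least $\rho|\zeta_1-\zeta_2|$, while a path dipping below $\rho$ has length at least $2(\min(r_1,r_2)-\rho)$; optimizing over the threshold $\rho$ shows $d\big((r_1,\zeta_1),(r_2,\zeta_2)\big)$ is bounded below by a strictly positive function of $|\zeta_1-\zeta_2|$ when the endpoints' radii are bounded. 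Transferring this inequality to $\astm$: given a standard $(r_0,\zeta_0)$, the quantity $|\zeta_0-\zeta|$ is infinite while $r_0,r$ are appreciable, so $\astd\big((r_0,\zeta_0),(r,\zeta)\big)$ is bounded below by a noninfinitesimal (indeed infinite) amount. Thus $(r,\zeta)$ is not approachable.

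The main obstacle I anticipate is making the lower-bound estimate rigorous in a clean way — i.e., verifying that a rectifiable path in $M$ from a point of bounded radius that achieves a large change in $\zeta$ genuinely must have length bounded below. One has to be a little careful because the path can oscillate in $r$, visit the puncture region many times, and the angular coordinate $\zeta$ is unbounded. The cleanest route is probably to split the length integral into the portions where $r\ge\rho$ and where $r<\rho$: on the first portion the $r^2\,d\zeta^2$ term gives $\ge\rho\,\big|\!\int d\zeta\big|$ over that portion, and one observes that the total $\zeta$-variation over the low-$r$ portions must then account for the rest of $|\zeta_1-\zeta_2|$, but moving between $\{r<\rho\}$ and the endpoint radii already costs radial length $\ge 2(\min(r_1,r_2)-\rho)$ per excursion; a single well-chosen $\rho$ (e.g. $\rho=\tfrac12\min(r_1,r_2)$, or $\rho$ comparable to $\min(r_1,r_2)/|\zeta_1-\zeta_2|$) suffices to conclude. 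Once the standard inequality is in place, the nonstandard conclusion follows immediately by transfer and the definitions of appreciable and infinite, with no further work.
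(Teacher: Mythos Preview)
Your first half---exhibiting a path of finite length by going radially in to an infinitesimal radius, sweeping the angle, and coming back out---is correct and is essentially the same idea the paper uses (the paper routes through $(1/\zeta^2,\zeta)$ and $(1/\zeta^2,0)$ instead, but the mechanism is identical).

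The second half has a genuine gap. You write that ``no standard point at infinitesimal distance'' is ``exactly the failure of approachability,'' but that is the failure of \emph{nearstandardness}; inapproachability requires a single standard $\varepsilon>0$ with $\astd((r_0,\zeta_0),(r,\zeta))\ge\varepsilon$ for \emph{every} standard $(r_0,\zeta_0)$. Your parenthetical ``indeed infinite'' cannot be right---it contradicts your own first paragraph, which shows the distance to the standard point $(r,0)$ is at most $2r+1$. And if you actually carry out your dichotomy with $\rho=\tfrac12\min(r_0,r)$ (or the other choice you mention), the lower bound you obtain is approximately $\min(r_0,r)$, not infinity; this depends on $r_0$ and tends to $0$ as the standard point approaches the puncture, so it does not furnish a uniform $\varepsilon$. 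The repair is easy but missing: pick $\rho$ depending only on $r$ (say $\rho=\st(r)/2$), and note that any path ending at radius $r$ that dips below $\rho$ must climb back up, giving length $\ge r-\rho$ regardless of the starting radius; combined with the trivial bound $d\ge|r-r_0|$ for small $r_0$, this yields $d\ge \st(r)/2$ uniformly.

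The paper avoids this uniformity issue entirely by a local argument: on the rectangle ${}^\ast[\tfrac12,2]\times{}^\ast[\zeta-1,\zeta+1]$ the metric dominates the product metric $dr^2+\tfrac14 d\zeta^2$, so the metric ball of radius $\tfrac12$ about $(1,\zeta)$ is trapped inside the rectangle and hence contains no standard points. This exhibits the required $\varepsilon$ directly, with no path-splitting or optimization.
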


\begin{proof}
The distance function~$d$ of~$M$ extends to the ihull~$(\widehat M,d)$
as in~\eqref{e681}.  Here points of~$\widehat M$ are halos in the
finite part of~$\astm$.  Notice that in~$\widehat M$ the origin has
been ``restored" and can be represented in coordinates~$(r,\zeta)$ by
a point~$(\epsilon,0)$ in~$\astm$ where~$\epsilon>0$ is infinitesimal.

Consider a point~$(1,\zeta)\in\astm$ where~$\zeta$ is infinite.  Let
us show that the point~$(1,\zeta)$ is at a finite
distance~${}^\ast\!d$ from the point~$(\epsilon,0)$; namely the
standard part of the distance is~$1$.  Indeed, the triangle inequality
applied to the sequence of points
$(1,\zeta)$,~$(\tfrac{1}{\zeta^2},\zeta)$,
$(\tfrac{1}{\zeta^2},0)$,~$(\epsilon,0)$ yields the bound
\[
{}^\ast\!d\big((1,\zeta),(\epsilon,0)\big) \leq (1-\tfrac{1}{\zeta^2})+
\tfrac{1}{\zeta^2}\zeta + |\tfrac{1}{\zeta^2}-\epsilon|\approx 1.
\]
Therefore
\[
d\Big(\overset{{}\halobig}{(1,\zeta)},
\overset{{}\halobig}{(\epsilon,0)}\Big)\leq1
\]
by \eqref{e681}.  Hence~$\overset{{}\halobig}{(1,\zeta)}\in
\widehat M$.

On the other hand, let us show that the point
$\overset{{}\halobig}{(1,\zeta)}\in\widehat M$ is not approachable
from~$M$.  Consider the rectangle~$K$ defined by the image in
$\widehat M$ of
\[
{}^\ast\hspace{-1pt}[\tfrac{1}{2},2]
\times{}^\ast\hspace{-1pt}[\zeta-1,\zeta+1]\subseteq \astm.
\]
The metric~$d$ of~$\widehat M$ restricted to the rectangle~$K$
dominates the product metric~$dr^2+\frac{1}{4}d\zeta^2$
by~\eqref{e691}.  The boundary of~$K$ separates the interior of~$K$
from the complement of~$K$.  Thus, to reach the finite part one must
first traverse the boundary.  Therefore $K$ includes the metric ball
of radius~$\frac{1}{2}$ centered at~$\overset{{}\halobig}{(1,\zeta)}$.
This ball contains no standard points.  Hence the
point~$\overset{{}\halobig}{(1,\zeta)}\in\widehat M$ is not
approachable.
\end{proof}

Note that what is responsible for the inapprochability is the fact
that the closure~$\hm \subseteq \widehat M$ does not have the
Heine--Borel property (and is not even locally compact).  Namely, the
boundary of the metric unit ball in~$\hm$ centered at the
origin~$\overset{{}\halobig}{(\epsilon,0)}$ is a line.

Do Carmo (\cite{Do92}, 1992, p.\;152) views the universal cover~$M$ of
the plane with a puncture as an example of a Riemannian manifold that
is \emph{nonextendible} but not complete.  Indeed,~$M$ is
nonextendible in the category of Riemannian manifolds, but~$M$ is
extendible in the category of metric spaces, in such a way that near
the ``extended'' origin~$\overset{{}\halobig}{(\epsilon,0)}\in
\hm\subseteq \widehat M$, the Heine--Borel property is violated.  In
Section~\ref{s5} we show that such a result holds more generally.

\section
{An approachable criterion for the Heine--Borel property}
\label{s5}

For the sake of completeness we provide a short proof of a relation
between approachability and the Heine--Borel property for metric
spaces.  For related results in the context of uniform spaces see
Henson--Moore \cite{He73}, \cite{He74} (but note that they use a
different notion of ``finiteness'' for a point $x\in \astm$).  For a
study of the relation between the Heine--Borel property and local
compactness, see \cite{Wi87}.

We show that the Heine--Borel property for the completion of a metric
space has a characterisation in terms of the absence of finite
inapproachable points; see Theorem~\ref{b}.  The following result
appears in Luxemburg \cite[Theorem 3.14.1, p.\;78]{Lu69} and
Hurd--Loeb \cite[Proposition 3.14]{Hu85}; cf.\;Davis
\cite[Theorems\;5.19 and 5.20, p.\;93]{Da77}.

\begin{proposition}
\label{a}
Let~$M$ be a metric space.  Then the following two properties are
equivalent:
\begin{enumerate}
\item
every approachable point in\,~$\astm$ is nearstandard;
\item
$M$ is complete.
\end{enumerate}
\end{proposition}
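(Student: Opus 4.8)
The plan is to prove the two implications separately, using the standard-part / nearstandardness machinery from Robinson's framework. Recall that a point $x\in\astm$ is \emph{nearstandard} if it is in the halo of some standard point $m\in M$, i.e. $\astd(x,m)$ is infinitesimal; and recall the definition of approachable given in the introduction. The key observation linking the two notions is that if $x$ is approachable then for each $n\in\N$ we may pick a standard $x_{1/n}\in M$ with $\astd(x,x_{1/n})<\tfrac1n$, and the sequence $(x_{1/n})$ is then Cauchy in $M$ (its terms are pairwise within $\tfrac1n+\tfrac1m$ of each other by the triangle inequality in $\astm$ and transfer).

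First I would prove $(2)\Rightarrow(1)$. Let $x\in\astm$ be approachable. Build the Cauchy sequence $(x_{1/n})$ in $M$ as above. By completeness it converges to some $m\in M$. Then I claim $\astd(x,m)$ is infinitesimal: for any standard $\varepsilon\in\R^+$, choose $n$ with $\tfrac1n<\tfrac\varepsilon2$ and $d(x_{1/n},m)<\tfrac\varepsilon2$; the triangle inequality gives $\astd(x,m)<\varepsilon$. Since $\varepsilon$ was an arbitrary standard positive real, $\astd(x,m)\approx 0$, so $x$ is nearstandard.

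Next I would prove $(1)\Rightarrow(2)$, or rather its contrapositive: if $M$ is not complete, I exhibit an approachable point of $\astm$ that is not nearstandard. Take a Cauchy sequence $(a_n)_{n\in\N}$ in $M$ with no limit in $M$. By passing to a subsequence I may assume $d(a_n,a_m)<\tfrac1{\min(n,m)}$ for all $n,m$. The sequence extends by transfer to an internal sequence $(a_\nu)_{\nu\in\astn}$, and I pick an infinite index $H\in\astn\setminus\N$; set $x=a_H$. For any standard $\varepsilon\in\R^+$ pick standard $n$ with $\tfrac1n<\varepsilon$; then by transfer $\astd(a_H,a_n)\le\tfrac1n<\varepsilon$ and $a_n\in M$, so $x$ is approachable. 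On the other hand, if $x$ were nearstandard, say $\astd(x,m)\approx 0$ with $m\in M$, then for each standard $n$ we would have $d(a_n,m)\le \astd(a_n,a_H)+\astd(a_H,m)<\tfrac1n+\text{(infinitesimal)}$, whence (taking standard parts) $d(a_n,m)\le\tfrac1n$, so $a_n\to m$ in $M$, contradicting that $(a_n)$ has no limit in $M$. Hence $x$ is approachable but not nearstandard, so $(1)$ fails.

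The only mildly delicate point is making the transfer steps precise in the second implication: one must phrase ``$(a_n)$ is a sequence in $M$ with $d(a_n,a_m)<\tfrac1{\min(n,m)}$'' as an internal statement about a function $\N\to M$, apply transfer to get an internal function $\astn\to\astm$, and then evaluate at an infinite hypernatural $H$ — the inequalities survive transfer because they hold for all pairs of \emph{standard} indices and the relevant formula is first-order. Everything else is a routine $\varepsilon$-chase with the triangle inequality in $\astm$, which holds by transfer from the triangle inequality in $M$.
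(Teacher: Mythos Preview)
Your proof is correct in both directions; the $\varepsilon$-chases and the transfer step producing $a_H$ from an infinite hypernatural index are all sound. Note, however, that the paper does not supply its own proof of this proposition: it is stated with attribution to Luxemburg and Hurd--Loeb (with a further pointer to Davis) and then used as a black box in the proof of Theorem~\ref{b}. Your argument is essentially the standard one found in those references, so there is nothing to compare against in the paper itself.
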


\begin{definition}
A metric space~$M$ is \emph{Heine--Borel} (HB) if every closed and
bounded subset of~$M$ is compact.
\end{definition}
We fix a point~$p \in M$.  Let~$n\in\N$.  Let~$B_n=\{x\in M\colon
d(x,p)\leq n \}$.  Clearly~$M$ is HB if and only if the sets~$B_n$ are
all compact.  Let~$\hm$ be the completion of~$M$.  Let~$\ob_n=\{x\in
\hm \colon d(x,p)\leq n \}$, for the same fixed~$p\in M$.  By
transfer, we have~$\astb_n = \{x\in \astm \colon d(x,p)\leq n \}$, and
similarly~$\aob_n = \{x\in {}^\ast\!\hm \colon d(x,p)\leq n \}$.

Clearly, an HB metric space is complete (given a Cauchy sequence, find
a convergent subsequence in the closure of its set of points).

\begin{theorem}
\label{b}
Let~$M$ be a metric space.  The following three properties are
equivalent:
\begin{enumerate}
\item
Every finite point in~$\astm$ is approachable;
\item
the completion~$\hm$ is Heine--Borel;
\item
$\hm=\widehat M$ (the metric completion is already all of the ihull).
\end{enumerate}
\end{theorem}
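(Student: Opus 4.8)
The plan is to prove the cycle of implications $(1)\Rightarrow(3)\Rightarrow(2)\Rightarrow(1)$, using the sets $B_n$, $\ob_n$, $\astb_n$, $\aob_n$ introduced before the statement, together with Proposition~\ref{a} and the fact already noted that an HB space is complete.

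\medskip

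For $(1)\Rightarrow(3)$: since $\hm\subseteq\widehat M$ always, I only need the reverse inclusion. A typical element of $\widehat M$ is $\overset{\halo}{x}$ for some finite $x\in F\subseteq\astm$. By hypothesis $x$ is approachable, so for every $\varepsilon\in\R^+$ there is a standard $x_\varepsilon\in M$ with $\astd(x,x_\varepsilon)<\varepsilon$. These $x_\varepsilon$ form a Cauchy net in $M$; its limit in $\hm$ is a point $y$, and $d(\overset{\halo}{x},\overset{\halo}{y})=\st(\astd(x,y))=0$ by \eqref{e681}, so $\overset{\halo}{x}=\overset{\halo}{y}\in\hm$. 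Hence $\widehat M\subseteq\hm$. (Equivalently: every finite point approachable means the approachable part is all of $\widehat M$, and the approachable part is exactly $\hm$, as recorded in Section~\ref{s62}.)

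\medskip

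For $(3)\Rightarrow(2)$: assume $\hm=\widehat M$; I must show each $\ob_n$ is compact, since $M$ HB reduces to compactness of the $B_n$ and the completion of $\hm$ is $\hm$ itself, so $\hm$ is HB iff each $\ob_n$ is compact. By Robinson's compactness criterion, $\ob_n$ is compact iff every point of $\aob_n$ is nearstandard in $\ob_n$ (i.e., infinitely close to a point of $\ob_n$). So take $x\in\aob_n$, i.e.\ $x\in{}^\ast\hm$ with ${}^\ast d(x,p)\leq n$; I want $x$ infinitely close to a point of $\ob_n$. The key observation is that a point of ${}^\ast\hm$ at finite distance from $p$ is infinitely close to a point of $\astm$ at finite distance from $p$ (density of $M$ in $\hm$ transferred), so WLOG $x\in\astb_n\subseteq F$. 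Then $\overset{\halo}{x}\in\widehat M=\hm$, and that halo, regarded as a point of $\hm$, has distance $\leq n$ from $p$, hence lies in $\ob_n$; and $x$ is infinitely close to it. Thus every point of $\aob_n$ is nearstandard in $\ob_n$, so $\ob_n$ is compact and $\hm$ is HB.

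\medskip

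For $(2)\Rightarrow(1)$: assume $\hm$ is HB, hence in particular complete, and take a finite $x\in\astm$. Finiteness means $x\in\astb_n$ for some $n$. Since $\hm$ is HB, $\ob_n$ is compact, so $\astb_n\subseteq\aob_n$ consists of nearstandard points: there is $y\in\ob_n\subseteq\hm$ with $\astd(x,y)\approx 0$. Because $M$ is dense in $\hm$, for each $\varepsilon\in\R^+$ pick $x_\varepsilon\in M$ with $d(y,x_\varepsilon)<\varepsilon/2$; transferring and using $\astd(x,y)<\varepsilon/2$ gives $\astd(x,x_\varepsilon)<\varepsilon$, so $x$ is approachable. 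The main obstacle I anticipate is the bookkeeping in $(3)\Rightarrow(2)$: one must be careful that ``nearstandard in $\ob_n$'' is the right criterion (rather than nearstandard in $M$), and that the halo of a finite point of $\astm$ genuinely sits inside $\ob_n$ rather than merely inside $\widehat M$ — this is where the fixed basepoint $p\in M$ and the distance formula \eqref{e681} must be invoked explicitly, and where Proposition~\ref{a} is used to know that approachable points of ${}^\ast\hm$ are nearstandard there.
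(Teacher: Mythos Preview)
Your argument is correct and close in spirit to the paper's, though organized differently. The paper proves $(2)\Rightarrow(1)$ exactly as you do, and then proves $(1)\Rightarrow(2)$ directly: assuming every finite point of $\astm$ is approachable, it first shows (via density of $M$ in $\hm$ and transfer, much as in your reduction step) that every finite point of ${}^\ast\hm$ is approachable, and then invokes Proposition~\ref{a} for the complete space $\hm$ to conclude nearstandardness, hence compactness of each $\ob_n$. The equivalence $(1)\Leftrightarrow(3)$ is left implicit, being immediate from the identification of $\hm$ with the approachable part of $\widehat M$ recorded in Section~\ref{s62}; your parenthetical remark in the $(1)\Rightarrow(3)$ step is really the whole argument there, and your Cauchy-net computation is unnecessary.

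Your route $(3)\Rightarrow(2)$ instead exhibits the nearstandard point concretely as $\overset{\halo}{x}\in\widehat M=\hm$, which is pleasant, but two points deserve tightening. First, ``WLOG $x\in\astb_n$'' overstates what density plus transfer gives: the nearby point of $\astm$ need only satisfy $\astd(\cdot,p)\leq n+\text{infinitesimal}$, not $\leq n$. This is harmless, since only finiteness is used and the final membership $\overset{\halo}{x}\in\ob_n$ follows from $\st(\astd(x,p))\leq n$. Second, the assertion that $x$ is infinitely close to $\overset{\halo}{x}$ (the latter viewed as a standard point of $\hm\hookrightarrow{}^\ast\hm$) needs a line: since $\overset{\halo}{x}\in cl(M)$, for each standard $\epsilon>0$ there is $m\in M$ with $d(\overset{\halo}{x},m)=\st(\astd(x,m))<\epsilon$, whence $\astd(x,\overset{\halo}{x})\leq\astd(x,m)+d(m,\overset{\halo}{x})<3\epsilon$. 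This is essentially Proposition~\ref{a} for $\hm$ made explicit, so your closing remark is apt; the paper simply quotes the proposition rather than unpacking it.
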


\begin{proof}
Assume~$\hm$ is HB.  Let~$a\in \astm$ be finite.  Then we
have~$a\in\astb_n\subseteq\aob_n$ for some~$n\in \N$.  Since~$\hm$ is
assumed to be HB, the ball~$\ob_n$ is compact.  Hence there is a
point~$x\in\ob_n$ with~$x\approx a$. Now let~$\epsilon>0$ be standard.
Since~$M$ is dense in~$\hm$ there is a point~$y\in M$ such
that~$d(y,x)<\epsilon$, and therefore~$d(y,a)<\epsilon$.

Conversely, assume every finite point in~$\astm$ is approachable. As a
first step we show that every finite point in~$^*\!\hm$ is
approachable.  Let~$a\in \aob_n$ and fix a standard~$\epsilon>0$.
Since~$M$ is dense in~$\hm$, the following holds for our fixed~$n$
and~$\epsilon$:
\[
(\forall x\in \ob_n) (\exists y\in B_{n+1})[d(x,y)<\epsilon].
\]
By transfer we obtain
\begin{equation}
\label{e41}
(\forall x\in\aob_n) (\exists y\in \astb_{n+1})[d(x,y)<\epsilon].
\end{equation}
Applying \eqref{e41} with~$x=a$, we obtain a point~$b\in\astb_{n+1}$
with~$d(a,b)<\epsilon$.  Every finite point in~$\astm$ is approachable
by assumption.  Therefore there is a point~$x \in M\subseteq\hm$
with~$d(x,b)<\epsilon$.  Thus~$d(x,a)<2\epsilon$, showing that every
finite point in~$^*\!\hm$ is approachable.

We now prove that~$\hm$ is HB by showing that each~$\ob_n$ is compact.
Let~$a\in\aob_n$.  We need to find a point~$x\in\ob_n$ with~$x\approx
a$.  We have shown above that~$a$ is approachable.  By Proposition
\ref{a},~$a$ is nearstandard, i.e., there is a point~$x\in \hm$
with~$x\approx a$.  Since~$d(a,p)\leq n$, and~$x,p$ are both standard,
we also have~$d(x,p)\leq n$, i.e.,~$x\in\ob_n$.
\end{proof}

Combining Proposition \ref{a} and Theorem \ref{b}, we obtain the
following corollary, which also appears in Goldbring
\cite[Proposition\;9.23]{Go14}.

\begin{corollary}
Let~$M$ be a metric space.  The following two properties are
equivalent:
\begin{enumerate}
\item
Every finite point in~$\astm$ is nearstandard;
\item
$M$ is Heine--Borel.
\end{enumerate}
\end{corollary}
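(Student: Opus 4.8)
The plan is to derive the corollary formally from Proposition~\ref{a} and Theorem~\ref{b}, using two elementary observations about the types of points in~$\astm$: a nearstandard point is automatically approachable (the standard point it is infinitely close to serves as the witness~$x_\varepsilon$ for every~$\varepsilon\in\R^+$), and an approachable point is automatically finite (take~$\varepsilon=1$ to place it within finite distance of a standard point). In other words, for points of~$\astm$ one has the chain of inclusions
\[
\{\text{nearstandard}\}\subseteq\{\text{approachable}\}\subseteq\{\text{finite}\}.
\]

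For the implication $(1)\Rightarrow(2)$, I would argue as follows. Hypothesis~(1) says every finite point is nearstandard; since approachable points are finite, every approachable point is then nearstandard, so Proposition~\ref{a} yields that~$M$ is complete, hence~$\hm=M$. Hypothesis~(1) also makes every finite point nearstandard, hence approachable, so condition~(1) of Theorem~\ref{b} holds; therefore~$\hm$ is Heine--Borel. Since~$\hm=M$, the space~$M$ itself is Heine--Borel.

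For the converse $(2)\Rightarrow(1)$, I would begin from the remark already recorded in the text that a Heine--Borel metric space is complete; thus~$M=\hm$, and in particular~$\hm$ is Heine--Borel. Theorem~\ref{b} then gives that every finite point in~$\astm$ is approachable, and Proposition~\ref{a}, applied to the complete space~$M$, gives that every approachable point in~$\astm$ is nearstandard. Composing these two facts shows that every finite point of~$\astm$ is nearstandard, as desired.

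I do not expect a genuine obstacle here; the proof is a bookkeeping exercise. The one point that requires care is not to confuse the statement ``$M$ is Heine--Borel'' with ``$\hm$ is Heine--Borel'': these agree precisely when~$M$ is complete, and it is exactly Proposition~\ref{a} that supplies the completeness needed to pass between Theorem~\ref{b} (which is phrased for~$\hm$) and the assertion about~$M$ in the corollary. Keeping straight which of the three conditions --- finite, approachable, nearstandard --- is in force at each step is the only subtlety.
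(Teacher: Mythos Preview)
Your proposal is correct and is precisely the combination of Proposition~\ref{a} and Theorem~\ref{b} that the paper invokes without spelling out; you have simply made explicit the passage through completeness (so that $M=\hm$) and the chain $\{\text{nearstandard}\}\subseteq\{\text{approachable}\}\subseteq\{\text{finite}\}$ needed to stitch the two results together. There is nothing to add or correct.
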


\section*{Acknowledgments}

We are grateful to the referees for helpful comments on an earlier
version of the manuscript.  V.\;Kanovei was partially supported by
RFBR grant no 18-29-13037.

\bibliographystyle{amsalpha}

\end{document}